\documentclass[11pt,reqno]{amsart}

\usepackage[T1]{fontenc}
\usepackage[utf8]{inputenc}
\usepackage{amsmath,amssymb,amsthm}
\usepackage{mathtools}
\usepackage[hidelinks]{hyperref}

\theoremstyle{plain}
\newtheorem{theorem}{Theorem}
\newtheorem{lemma}[theorem]{Lemma}
\newtheorem{proposition}[theorem]{Proposition}
\newtheorem{corollary}[theorem]{Corollary}

\theoremstyle{definition}
\newtheorem{definition}[theorem]{Definition}
\newtheorem{question}{Question}

\theoremstyle{remark}
\newtheorem{remark}[theorem]{Remark}

\DeclareMathOperator{\TR}{TR}
\newcommand{\R}{\mathrm{R}}

\begin{document}

\title{Connected Counterexamples for Target Ramsey Numbers}

\author{Guillaume Lecomte}
\address{Independent researcher, Paris, France}
\email{guillaume.lecomteexed@edu.executive.em-lyon.com}
\urladdr{https://orcid.org/0009-0003-3532-3379}

\date{5 August 2026}

\subjclass[2020]{05C55 (primary), 05C35 (secondary)}
\keywords{Ramsey number, target Ramsey number, Ramsey chain, grid graph, bandwidth}

\begin{abstract}
Chartrand and Zhang asked whether there exists a graph $G$ without isolated vertices whose target Ramsey number satisfies $\TR(G) > \R(G)$. We answer the question affirmatively, even under strong structural restrictions. If $\Gamma_t = P_t \square P_t$ is the square $t \times t$ grid, then the elementary counting bound of Chartrand and Zhang gives $\TR(\Gamma_t) \geq 2t(t-1)+1$, whereas a theorem of Mota, S\'ark\"ozy, Schacht and Taraz gives $\R(\Gamma_t) = (3/2 + o(1))t^2$. Consequently, $\TR(\Gamma_t) > \R(\Gamma_t)$ for all sufficiently large $t$, so there are infinitely many connected, planar, bipartite counterexamples of maximum degree four. Higher-dimensional grids show that the ratio $\TR(G)/\R(G)$ is unbounded on connected bipartite graphs, while each fixed-dimensional witnessing family has bounded maximum degree. We also record an independent construction of disconnected counterexamples from the theorem of Burr, Erd\H{o}s and Spencer on Ramsey numbers of multiple copies: if a fixed graph $H$ satisfies $e(H) > 2v(H) - \alpha(H)$, then $\TR(qH) > \R(qH)$ for every sufficiently large $q$.
\end{abstract}

\maketitle

\section{Introduction}

For a graph $G$ without isolated vertices, the diagonal Ramsey number $\R(G)$ is the least integer $n$ such that every red--blue colouring of $E(K_n)$ contains a monochromatic copy of $G$. We write $v(G)$ and $e(G)$ for the order and size of $G$, and $\alpha(G)$ for its independence number.

Let $m = e(G)$. A \emph{Ramsey chain} with target graph $G$ is a sequence
\[
G_1, G_2, \ldots, G_m
\]
of pairwise edge-disjoint monochromatic subgraphs of a red--blue coloured complete graph such that
\[
e(G_i) = i, \qquad G_i \text{ is isomorphic to a subgraph of } G_{i+1} \quad (1 \leq i < m),
\]
and $G_m \cong G$. The \emph{target Ramsey number} $\TR(G)$ is the least integer $n$ such that every red--blue colouring of $K_n$ contains such a chain. Chartrand and Zhang proved that $\TR(G)$ exists for every graph without isolated vertices and observed that $\TR(G) \geq \R(G)$, since the last link of a chain is itself a monochromatic copy of $G$. They asked the following; see \cite{CZ1,CZ2}.

\begin{question}
Does there exist a graph $G$ without isolated vertices such that $\TR(G) > \R(G)$?
\end{question}

We give two independent affirmative answers. The first uses square grids and produces connected planar bipartite counterexamples of bounded degree. Higher-dimensional grids give arbitrarily large values of the ratio $\TR(G)/\R(G)$, although the degree bound necessarily grows with the dimension in this construction. The second combines the same elementary counting obstruction with the eventual linear behaviour of Ramsey numbers of many disjoint copies of a fixed graph, and produces disconnected counterexamples.

\section{The counting obstruction}

The following elementary observation is due to Chartrand and Zhang \cite{CZ1,CZ2}; we include the proof for completeness. It is the sole target-Ramsey input in both constructions.

\begin{lemma}\label{lem:count}
For every graph $G$ without isolated vertices,
\[
\TR(G) \geq e(G) + 1.
\]
\end{lemma}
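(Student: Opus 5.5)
The plan is to count edges. A Ramsey chain $G_1,\dots,G_m$ with $m=e(G)$ consists of pairwise edge-disjoint subgraphs with $e(G_i)=i$, so any host $K_n$ containing one must have at least
\[
\sum_{i=1}^{m} i = \frac{m(m+1)}{2}
\]
edges. Monochromaticity plays no role: for every $n$ with $\binom{n}{2} < \binom{m+1}{2}$, no red--blue colouring of $K_n$ (for instance the all-red one) contains a chain. Hence $\TR(G) > n$ for each such $n$.

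The key steps, in order, are these. First, record that edge-disjointness gives $\sum_i e(G_i)\le \binom{n}{2}$. Second, substitute $e(G_i)=i$ to get $\binom{m+1}{2}\le\binom{n}{2}$. Third, use that $x\mapsto\binom{x}{2}$ is strictly increasing on positive integers, which forces $n\ge m+1$. Therefore every $n\le m$ admits a colouring (indeed every colouring) with no chain, and so $\TR(G)\ge m+1=e(G)+1$.

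There is no real obstacle. The only point to handle with care is the trivial edge case where $n$ is so small that $K_n$ has no edges at all. Since $G$ has no isolated vertices, we have $m\ge 1$, so the case $n\le 1$ is covered by the same inequality and $\TR(G)$ is well defined as a positive integer.

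For the record, the counting argument in fact yields $\binom{n}{2}\ge\binom{m+1}{2}$, which is exactly the bound $n\ge m+1$. This is the form to be combined later with $\R$-asymptotics. For grids, $e(\Gamma_t)=2t(t-1)$ gives $\TR(\Gamma_t)\ge 2t(t-1)+1$, which matches the bound quoted in the abstract.
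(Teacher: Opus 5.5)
Your proposal is correct and matches the paper's proof: edge-disjointness of links of sizes $1,\dots,m$ forces $\binom{n}{2}\ge\binom{m+1}{2}$. Strict monotonicity of $s\mapsto\binom{s}{2}$ on the positive integers then gives $n\ge m+1$, so no colouring of $K_n$ with $n\le m$ contains a chain.
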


\begin{proof}
Set $m = e(G)$, and suppose that a Ramsey chain with target $G$ occurs in $K_n$. Since its links are pairwise edge-disjoint and have sizes $1, 2, \ldots, m$, their union contains exactly
\[
\sum_{i=1}^{m} i = \binom{m+1}{2}
\]
distinct edges. Hence
\[
\binom{n}{2} \geq \binom{m+1}{2}.
\]
The map $s \mapsto \binom{s}{2}$ is strictly increasing on the positive integers, so $n \geq m+1$. Equivalently, for $n \leq m$ no colouring of $K_n$ contains a Ramsey chain with target $G$, and the conclusion follows from the definition of $\TR(G)$.
\end{proof}

\begin{proposition}\label{prop:criterion}
If a graph $G$ without isolated vertices satisfies
\[
\R(G) \leq e(G),
\]
then $\TR(G) > \R(G)$.
\end{proposition}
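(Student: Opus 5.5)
The plan is to chain the hypothesis with Lemma~\ref{lem:count}, so no new combinatorics is needed. Lemma~\ref{lem:count} gives $\TR(G) \geq e(G) + 1$ for every graph $G$ without isolated vertices, and the hypothesis says $\R(G) \leq e(G)$. Combining the two, I would write
\[
\TR(G) \geq e(G) + 1 > e(G) \geq \R(G),
\]
which is the claimed strict inequality $\TR(G) > \R(G)$.

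The only points worth checking are that both quantities are well-defined integers and that the lemma applies. The Ramsey number $\R(G)$ exists by Ramsey's theorem. The target Ramsey number $\TR(G)$ exists by the Chartrand--Zhang existence result quoted in the introduction. The hypothesis that $G$ has no isolated vertices is exactly the hypothesis of Lemma~\ref{lem:count}, so the lemma may be applied.

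There is no real obstacle here: the statement is a direct corollary, and the strictness comes from the $+1$ in Lemma~\ref{lem:count}. The substantive work in the paper lies elsewhere, namely in exhibiting graphs $G$ with $\R(G) \leq e(G)$. For instance, the grids $\Gamma_t$ have $e(\Gamma_t) = 2t(t-1)$, while the Mota--S\'ark\"ozy--Schacht--Taraz theorem gives $\R(\Gamma_t) = (3/2+o(1))t^2$. That verification belongs to the later applications, not to this proposition.
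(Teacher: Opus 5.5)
Your proof is correct and is the paper's argument: the paper also combines Lemma~\ref{lem:count} with the hypothesis to write $\TR(G) \geq e(G)+1 > \R(G)$. In both, the strictness comes from the $+1$ in the lemma.
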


\begin{proof}
Lemma~\ref{lem:count} gives $\TR(G) \geq e(G) + 1 > \R(G)$.
\end{proof}

Thus it suffices to find graphs whose ordinary Ramsey number is at most their number of edges. Large grids provide a particularly structured family of this kind.

\section{Connected planar counterexamples}

Let $P_t$ denote the path on $t$ vertices and let
\[
\Gamma_t = P_t \square P_t
\]
be the square $t \times t$ grid. For $t \geq 3$, the graph $\Gamma_t$ is connected, planar, bipartite and has maximum degree four. Moreover,
\begin{equation}\label{eq:grid}
v(\Gamma_t) = t^2, \qquad e(\Gamma_t) = 2t(t-1) = 2t^2 - 2t.
\end{equation}

Mota, S\'ark\"ozy, Schacht and Taraz proved that the two-colour Ramsey number of the grid $G_{a,b} = P_a \square P_b$ satisfies
\[
\R(G_{a,b}) = \left( \tfrac{3}{2} + o(1) \right) ab,
\]
where the $o(1)$ tends to zero as $ab \to \infty$; see \cite[Corollary 1.4]{MSST}. In particular, as $t \to \infty$,
\begin{equation}\label{eq:ramseygrid}
\R(\Gamma_t) = \left( \tfrac{3}{2} + o(1) \right) t^2.
\end{equation}
Only the upper bound implicit in \eqref{eq:ramseygrid} will be used.

\begin{theorem}\label{thm:planar}
For every sufficiently large integer $t$,
\[
\TR(\Gamma_t) > \R(\Gamma_t).
\]
Consequently, there are infinitely many connected planar bipartite graphs of maximum degree four whose target Ramsey number is strictly larger than their ordinary Ramsey number.
\end{theorem}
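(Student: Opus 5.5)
The plan is to apply Proposition~\ref{prop:criterion} to $G = \Gamma_t$. It therefore suffices to show that $\R(\Gamma_t) \leq e(\Gamma_t)$ for all sufficiently large $t$. The grid $\Gamma_t$ has no isolated vertices for $t \geq 2$, so the proposition applies. By \eqref{eq:grid} the target bound is $\R(\Gamma_t) \leq 2t^2 - 2t$.

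\textbf{Step 1.} Fix $\varepsilon = 1/4$. Use only the upper bound contained in \eqref{eq:ramseygrid}, namely \cite[Corollary 1.4]{MSST}. This gives a $t_0$ such that
\[
\R(\Gamma_t) \leq \left(\tfrac{3}{2} + \tfrac14\right) t^2 = \tfrac74 t^2 \quad \text{for all } t \geq t_0.
\]
Here the $o(1)$ term tends to zero as $ab = t^2 \to \infty$, which is the same as $t \to \infty$.

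\textbf{Step 2.} Compare with the edge count. We have $\tfrac74 t^2 \leq 2t^2 - 2t$ if and only if $2t \leq \tfrac14 t^2$, that is, $t \geq 8$. So for $t \geq \max(t_0, 8)$ we get $\R(\Gamma_t) \leq e(\Gamma_t)$. Proposition~\ref{prop:criterion} then yields $\TR(\Gamma_t) > \R(\Gamma_t)$. The actual margin is $(\tfrac12 - o(1))t^2 - 2t$, so any $\varepsilon < \tfrac12$ would also work.

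\textbf{Step 3.} For the consequence, note that for $t \geq 3$ each $\Gamma_t$ is connected, planar and bipartite, with maximum degree four, as recorded before the theorem. The graphs $\Gamma_t$ are pairwise non-isomorphic because they have different orders $t^2$. Taking all $t \geq \max(t_0, 8, 3)$ therefore gives infinitely many such counterexamples.

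There is no real obstacle: the argument is a direct comparison of asymptotics. The only point needing care is that \cite{MSST} must give a genuine upper bound with coefficient strictly less than $2$ for all large $t$, and not merely along a subsequence. The statement $\R(G_{a,b}) = (3/2 + o(1))ab$ as $ab \to \infty$, specialised to $a = b = t$, gives exactly this. The proof is not effective, since $t_0$ depends on the unspecified $o(1)$ term; it yields only "sufficiently large $t$", which matches the statement.
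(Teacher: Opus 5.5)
Your proposal is correct and follows essentially the same route as the paper: both combine the counting bound $\TR(\Gamma_t)\geq 2t^2-2t+1$ (you via Proposition~\ref{prop:criterion}, the paper via Lemma~\ref{lem:count} directly) with the upper bound $\R(\Gamma_t)\leq \tfrac74 t^2$ for large $t$ from \eqref{eq:ramseygrid}. Your explicit threshold $t\geq\max(t_0,8)$ is a harmless sharpening of the paper's phrasing.
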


\begin{proof}
By Lemma~\ref{lem:count} and \eqref{eq:grid},
\begin{equation}\label{eq:trgrid}
\TR(\Gamma_t) \geq 2t^2 - 2t + 1.
\end{equation}
On the other hand, \eqref{eq:ramseygrid} implies, for example, that
\[
\R(\Gamma_t) \leq \tfrac{7}{4} t^2
\]
for all sufficiently large $t$. Since
\[
2t^2 - 2t + 1 - \tfrac{7}{4} t^2 = \tfrac{1}{4} t^2 - 2t + 1 \longrightarrow +\infty,
\]
we have
\[
\R(\Gamma_t) < 2t^2 - 2t + 1 \leq \TR(\Gamma_t)
\]
for every sufficiently large $t$.
\end{proof}

\begin{corollary}
As $t \to \infty$,
\[
\TR(\Gamma_t) - \R(\Gamma_t) \geq \left( \tfrac{1}{2} - o(1) \right) t^2,
\]
and
\[
\liminf_{t \to \infty} \frac{\TR(\Gamma_t)}{\R(\Gamma_t)} \geq \frac{4}{3}.
\]
\end{corollary}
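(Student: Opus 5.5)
The plan is to combine the lower bound \eqref{eq:trgrid} from Lemma~\ref{lem:count} with the asymptotic \eqref{eq:ramseygrid}. Here we need the exact leading constant $\tfrac32$ from \eqref{eq:ramseygrid}, not just the crude bound $\tfrac74 t^2$ used in Theorem~\ref{thm:planar}.

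\emph{Difference.} Write $\R(\Gamma_t) = (\tfrac32 + \varepsilon_t)t^2$ with $\varepsilon_t \to 0$. Then \eqref{eq:trgrid} gives
\[
\TR(\Gamma_t) - \R(\Gamma_t) \geq 2t^2 - 2t + 1 - \left(\tfrac32 + \varepsilon_t\right)t^2 = \left(\tfrac12 - \varepsilon_t - \tfrac{2}{t} + \tfrac{1}{t^2}\right)t^2 .
\]
The bracket equals $\tfrac12 - o(1)$, which proves the first claim. Only the upper-bound half of \eqref{eq:ramseygrid} is needed. Since $\varepsilon_t$ may be negative, one simply bounds $\varepsilon_t \le |\varepsilon_t| = o(1)$.

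\emph{Ratio.} Divide \eqref{eq:trgrid} by $\R(\Gamma_t)$:
\[
\frac{\TR(\Gamma_t)}{\R(\Gamma_t)} \geq \frac{2 - 2/t + 1/t^2}{\tfrac32 + \varepsilon_t}.
\]
As $t\to\infty$ the numerator tends to $2$ and the denominator to $\tfrac32$, so the right-hand side tends to $\tfrac43$. Taking $\liminf$ therefore gives the second claim. The only point needing care is that the denominator is positive, which holds because $\R(\Gamma_t) \ge 1$.

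There is no real obstacle here. The only subtlety is making sure the $o(1)$ in \eqref{eq:ramseygrid} is read as $t\to\infty$, which is valid since $ab = t^2 \to \infty$.
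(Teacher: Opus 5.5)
Your proposal is correct and is the paper's argument: combine the counting bound $\TR(\Gamma_t) \geq 2t^2 - 2t + 1$ with the upper half of $\R(\Gamma_t) = (\tfrac32 + o(1))t^2$, then divide by $t^2$ to get the ratio. Your explicit bookkeeping of $\varepsilon_t$ and the lower-order terms $-2/t + 1/t^2$ simply spells out what the paper's one-line proof leaves implicit.
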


\begin{proof}
Combine \eqref{eq:trgrid} with \eqref{eq:ramseygrid}, and divide by $t^2$ for the ratio statement.
\end{proof}

\begin{remark}[No explicit first grid]
The asymptotic theorem quoted in \eqref{eq:ramseygrid} does not by itself identify the least $t$ for which $\Gamma_t$ is a counterexample. Determining the smallest connected counterexample, or even the first square grid satisfying $\TR(\Gamma_t) > \R(\Gamma_t)$, is a separate finite problem.
\end{remark}

\section{Higher-dimensional grids}

For integers $d \geq 2$ and $t \geq 2$, let
\[
\Gamma_{d,t} = P_t^{\square d}
\]
be the $d$-dimensional grid. It is connected and bipartite, its two vertex classes differ in size by at most one, and
\begin{equation}\label{eq:dgrid}
v(\Gamma_{d,t}) = t^d, \qquad e(\Gamma_{d,t}) = d\, t^{d-1}(t-1), \qquad \Delta(\Gamma_{d,t}) = 2d.
\end{equation}
Its bandwidth is at most $t^{d-1} = o(t^d)$ for fixed $d$. Hence, for fixed $d$, the two-colour result of Mota, S\'ark\"ozy, Schacht and Taraz applies and gives
\begin{equation}\label{eq:ramseydgrid}
\R(\Gamma_{d,t}) \leq \left( \tfrac{3}{2} + o(1) \right) t^d \qquad (t \to \infty,\ d \text{ fixed});
\end{equation}
see \cite[Theorem 1.2 and the discussion of higher-dimensional grids]{MSST}.

\begin{theorem}\label{thm:dgrid}
For every fixed integer $d \geq 2$ and every sufficiently large $t$,
\[
\TR(\Gamma_{d,t}) > \R(\Gamma_{d,t}),
\]
and
\[
\liminf_{t \to \infty} \frac{\TR(\Gamma_{d,t})}{\R(\Gamma_{d,t})} \geq \frac{2d}{3}.
\]
Consequently,
\[
\sup \left\{ \frac{\TR(G)}{\R(G)} : G \text{ is connected and bipartite} \right\} = \infty.
\]
More precisely, for every $M > 0$ there exists a degree bound $\Delta_M$ and an infinite family of connected bipartite graphs of maximum degree at most $\Delta_M$ for which $\liminf \TR/\R \geq M$.
\end{theorem}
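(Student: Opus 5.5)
The plan is to run the same comparison as in Theorem~\ref{thm:planar}, now with the dimension $d$ as a free parameter. By Lemma~\ref{lem:count} and \eqref{eq:dgrid},
\[
\TR(\Gamma_{d,t}) \geq d\,t^{d-1}(t-1) + 1 = d\,t^d - d\,t^{d-1} + 1 = (d - o(1))\,t^d \qquad (t\to\infty,\ d \text{ fixed}).
\]
On the other side, \eqref{eq:ramseydgrid} gives $\R(\Gamma_{d,t}) \leq (\tfrac32 + o(1))t^d$. Dividing, we get
\[
\liminf_{t\to\infty} \frac{\TR(\Gamma_{d,t})}{\R(\Gamma_{d,t})} \geq \frac{d}{3/2} = \frac{2d}{3},
\]
which is the stated ratio bound.

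For the strict inequality $\TR > \R$ at each fixed $d \geq 2$, I would fix a constant strictly between $\tfrac32$ and $d$, say $\tfrac74$. For large $t$ we then have $\R(\Gamma_{d,t}) \leq \tfrac74 t^d$. The difference
\[
d\,t^d - d\,t^{d-1} + 1 - \tfrac74 t^d = \bigl(d - \tfrac74\bigr)t^d - d\,t^{d-1} + 1
\]
tends to $+\infty$, since $d - \tfrac74 \geq \tfrac14 > 0$. Alternatively, one can note directly that $\R(\Gamma_{d,t}) \leq e(\Gamma_{d,t})$ for large $t$ and invoke Proposition~\ref{prop:criterion}.

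For the final assertions, given $M > 0$ I would choose $d$ with $2d/3 > M$, for instance $d = \lceil 3M/2 \rceil + 1$. The family $\{\Gamma_{d,t}\}_{t \geq 2}$ is infinite, connected and bipartite, has maximum degree $2d =: \Delta_M$ by \eqref{eq:dgrid}, and satisfies $\liminf \TR/\R \geq 2d/3 > M$. In particular, for each such $d$ there are graphs in the family with $\TR/\R > M$, so the supremum over connected bipartite graphs is infinite.

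The only genuinely non-elementary step is the applicability of \eqref{eq:ramseydgrid} for every fixed $d$. Its hypotheses are bounded maximum degree $2d$, bandwidth at most $t^{d-1} = o(t^d)$ (order the vertices lexicographically, so adjacent vertices are at most $t^{d-1}$ apart), and balanced bipartition classes. The paper takes this as given from \cite{MSST}. Beyond that, the main care needed is that the $o(1)$ terms depend on $d$, so all limits must be taken with $d$ fixed before $d$ is chosen in terms of $M$. This is exactly why the conclusion is stated as a $\liminf$ for each family rather than uniformly in $d$.
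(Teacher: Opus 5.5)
Your proof is correct and follows the paper's argument essentially verbatim. You combine the counting bound from Lemma~\ref{lem:count}, $\TR(\Gamma_{d,t}) \geq (d-o(1))t^d$, with the bandwidth-based Ramsey bound \eqref{eq:ramseydgrid}, and then choose $d$ in terms of $M$ with $\Delta_M = 2d$; your explicit intermediate constant $\tfrac74$ and your strict choice $2d/3 > M$ (the paper uses $\geq$) only make the eventual-inequality and supremum steps slightly cleaner.
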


\begin{proof}
By Lemma~\ref{lem:count} and \eqref{eq:dgrid},
\[
\TR(\Gamma_{d,t}) \geq d\, t^{d-1}(t-1) + 1 = (d + o(1)) t^d.
\]
Combining this with \eqref{eq:ramseydgrid} gives the asserted ratio. Since $2d/3 > 1$ for every $d \geq 2$, the strict inequality follows for all sufficiently large $t$. Given $M > 0$, choose $d$ with $2d/3 \geq M$ and take $\Delta_M = 2d$.
\end{proof}

\begin{remark}
Planarity is lost for $d \geq 3$. Thus Theorem~\ref{thm:planar} gives the stronger structural conclusion, while Theorem~\ref{thm:dgrid} gives the stronger quantitative separation.
\end{remark}

\section{A second construction from multiple copies}

For a positive integer $q$ and a graph $H$, write $qH$ for the disjoint union of $q$ copies of $H$. We use the following consequence of a theorem of Burr, Erd\H{o}s and Spencer \cite{BES}; their theorem in fact determines the exact eventual linear form.

\begin{theorem}[Burr--Erd\H{o}s--Spencer]\label{thm:bes}
For every fixed graph $H$ without isolated vertices, there exist an integer $q_0(H)$ and a constant $C_H$ such that
\[
\R(qH) \leq \bigl( 2v(H) - \alpha(H) \bigr) q + C_H
\]
for every integer $q \geq q_0(H)$.
\end{theorem}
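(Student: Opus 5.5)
Since this is a cited result, the paper only needs the stated upper bound. Here is how I would prove it directly. Write $v = v(H)$, $\alpha = \alpha(H)$, and fix an independent set $I \subseteq V(H)$ with $|I| = \alpha$. The basic observation is the following. A red copy of $H$ exists as soon as we have
\begin{itemize}
\item[(i)] a red clique $A$ with $|A| \geq v - \alpha$, and
\item[(ii)] $\alpha$ further vertices all of whose edges to $A$ are red.
\end{itemize}
The edges inside $I$ are never used, so they may have any colour. The same holds with the colours swapped. So the plan is to build monochromatic cliques cheaply, and then repeatedly attach $\alpha$ ``outside'' vertices to them. On average this should consume only $2v - \alpha$ vertices of $K_n$ per copy of $H$ obtained.

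\textbf{Step 1 (cliques).} Let $n = (2v - \alpha)q + C_H$ and fix a red--blue colouring of $K_n$. Greedily extract vertex-disjoint monochromatic copies of $K_v$, using Ramsey's theorem while at least $\R(K_v)$ vertices remain. Each $K_v$ contains $H$. So if $q$ of these cliques share a colour we are done, and otherwise we have many red cliques $A_1, \dots, A_r$ and many blue cliques $B_1, \dots, B_b$ with $r, b < q$. Since $rv + bv \geq n - \R(K_v)$ and $n \geq (2v - \alpha)q$, both $r$ and $b$ are at least about $(v - \alpha)q / v$. In particular both colours occur a linear number of times.

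\textbf{Step 2 (pairing red and blue cliques).} Take a red clique $A$ and a blue clique $B$ and examine the bipartite colouring between them. Fix a $(v - \alpha)$-subset $A' \subseteq A$. Suppose at least $\alpha$ vertices of $B$ send only red edges to $A'$. Then $A'$ together with those $\alpha$ vertices spans a red $H$. Otherwise, by pigeonhole over the possible subsets, we aim to find $\alpha$ vertices of $A$ that are completely blue to some $(v - \alpha)$-subset of $B$, giving a blue $H$. For large $v$ relative to the leftover structure, I expect Kővári–Sós–Turán-type counting to force one of the two alternatives, possibly after replacing $K_v$ by $K_N$ for a large constant $N = N(H)$.

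\textbf{Step 3 (accounting).} Each extracted copy of $H$ uses $v - \alpha$ clique vertices plus $\alpha$ borrowed vertices. The borrowed vertices come from cliques of the opposite colour, which would otherwise be wasted because that colour class has fewer than $q$ cliques. I would run a potential argument. Track the number of red $H$'s found plus the number of unused vertices remaining in red cliques, divided by $v - \alpha$, and similarly for blue. Then show that each step keeps this potential increasing at the rate of one copy per $2v - \alpha$ vertices consumed. All losses that occur only a bounded number of times, such as the Ramsey remainder and clique fragments smaller than $N$, are absorbed into $C_H$, and they only matter once $q \geq q_0(H)$.

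\textbf{Main obstacle.} I expect the hardest part to be this bookkeeping in Step 3: ensuring the borrowed $\alpha$ vertices are never double-counted, and that the mixed case (some red, some blue extractions) still achieves the rate $2v - \alpha$ rather than $2v$. To handle it, I would first enlarge the cliques to size $N \gg v$. I would then use the fact that in a bipartite $2$-colouring between two huge cliques, one colour contains a complete bipartite $K_{\alpha, v - \alpha}$ oriented the right way. This makes each extraction local, so the accounting reduces to a linear inequality in $r$ and $b$.
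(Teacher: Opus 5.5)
The paper gives no proof of this statement; it is quoted from Burr, Erd\H{o}s and Spencer. Measured against the standard argument, your proposal has a genuine gap exactly where you flag it. Step~3 is a hope rather than an argument, and its natural version cannot work. Each of your extractions consumes $v$ vertices and commits to a single colour, and that colour is dictated by the bipartite colouring between the paired cliques, not chosen by you. Nothing in Steps~1--3 prevents the forced colours from coming out roughly balanced. In that case, after consuming $(2v-\alpha)q$ vertices, you hold only about $(1-\tfrac{\alpha}{2v})q$ copies of each colour. So no potential that is updated one committed copy at a time can guarantee the rate $2v-\alpha$. There is also a second leak. If you fix a partition into cliques of constant size $N$, there are $\Theta(q)$ cliques, so the fragments left when bipartite Ramsey stops applying occur $\Theta(q)$ times, not boundedly often. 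They cannot be absorbed into $C_H$, and this route gives at best $(2v-\alpha+\varepsilon)q$.

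The missing idea is to defer the colour choice. Let $t$ be so large that every red--blue colouring of $K_{t-1,t-1}$ contains a monochromatic $K_{v,v}$. Suppose the colouring contains a red $K_t$ on $X$ and a blue $K_t$ on $Y$. The bipartite Ramsey input from your Step~2 then gives a monochromatic $K_{v,v}$ between $X\setminus Y$ and $Y\setminus X$. If it is red, choose:
\begin{itemize}
\item $S$: $\alpha$ vertices of its $Y$-side;
\item $P$: $v-\alpha$ vertices of its $X$-side;
\item $Q$: $v-\alpha$ further vertices of $Y\setminus X$.
\end{itemize}
Then $P\cup S$ spans a red $H$ (map a maximum independent set of $H$ onto $S$ and the rest onto $P$), and $Q\cup S$ is a blue $K_v$. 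The blue case is symmetric. Thus $2v-\alpha$ vertices carry \emph{both} a red and a blue copy of $H$, and the two copies share the $\alpha$ borrowed vertices.

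Now induct on $q$ with $C_H=\R(K_v,K_t)$. Delete this configuration, apply the hypothesis to the remaining $(2v-\alpha)(q-1)+C_H$ vertices, and add the copy whose colour matches the one returned. If no such pair of cliques exists, say there is no blue $K_t$, then every set of $\R(K_v,K_t)$ vertices contains a red $K_v$. Hence $q$ disjoint red $K_v$'s can be removed greedily, because $(2v-\alpha)q+C_H-(q-1)v\geq v+C_H$. This gives the bound for every $q\geq 1$, with no fragment bookkeeping at all.
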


\begin{theorem}\label{thm:copies}
Let $H$ be a fixed graph without isolated vertices. If
\begin{equation}\label{eq:crit}
e(H) > 2v(H) - \alpha(H),
\end{equation}
then
\[
\TR(qH) > \R(qH)
\]
for every sufficiently large integer $q$.
\end{theorem}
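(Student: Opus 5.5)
The plan is to apply Proposition~\ref{prop:criterion} to $G = qH$. Two things must be checked: that $qH$ has no isolated vertices, and that $\R(qH) \leq e(qH)$ once $q$ is large. The first is immediate, because every vertex of $qH$ lies in a copy of $H$, and $H$ has no isolated vertices. Proposition~\ref{prop:criterion} is therefore applicable, and everything reduces to comparing the two sides of $\R(qH) \leq e(qH)$ as functions of $q$.

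Since the copies of $H$ are disjoint, the number of edges is exactly linear in $q$:
\[
e(qH) = q\, e(H).
\]
For the Ramsey side, Theorem~\ref{thm:bes} gives, for $q \geq q_0(H)$,
\[
\R(qH) \leq \bigl(2v(H) - \alpha(H)\bigr) q + C_H .
\]
Put $\delta = e(H) - \bigl(2v(H) - \alpha(H)\bigr)$. Hypothesis \eqref{eq:crit} says $\delta > 0$, and since all quantities are integers, in fact $\delta \geq 1$. Subtracting the two bounds gives
\[
e(qH) - \R(qH) \geq \delta q - C_H ,
\]
which is nonnegative as soon as $q \geq \max\{q_0(H), C_H/\delta\}$; in particular it suffices that $q \geq \max\{q_0(H), C_H\}$. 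For all such $q$ we get $\R(qH) \leq e(qH)$, and Proposition~\ref{prop:criterion} then yields $\TR(qH) > \R(qH)$.

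No step here is a real obstacle. All the depth sits in Theorem~\ref{thm:bes}, which is taken as given, and the only point to watch is that the additive constant $C_H$ is independent of $q$, so the strictly larger linear coefficient $e(H)$ eventually dominates. It may also help to point out, after the proof, an explicit $H$ satisfying \eqref{eq:crit}, so that the theorem is not vacuous. For example, $H = K_k$ with $k \geq 5$ works, since $\binom{k}{2} > 2k - 1$ there; for $k=5$ this reads $10 > 9$.
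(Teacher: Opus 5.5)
Your proof is correct and is essentially the paper's argument. The paper applies Lemma~\ref{lem:count} directly together with Theorem~\ref{thm:bes} to get $\TR(qH) - \R(qH) \geq \delta_H q + 1 - C_H > 0$, which is your route through Proposition~\ref{prop:criterion} written in one line. Your explicit check that $qH$ has no isolated vertices is a detail the paper leaves implicit.
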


\begin{proof}
Put
\[
a_H = 2v(H) - \alpha(H), \qquad \delta_H = e(H) - a_H > 0.
\]
Since $e(qH) = q\,e(H)$, Lemma~\ref{lem:count} gives $\TR(qH) \geq q\,e(H) + 1$. By Theorem~\ref{thm:bes}, for all sufficiently large $q$ we have $\R(qH) \leq a_H q + C_H$. Consequently,
\[
\TR(qH) - \R(qH) \geq q\,e(H) + 1 - (a_H q + C_H) = \delta_H q + 1 - C_H,
\]
which is positive for all sufficiently large $q$.
\end{proof}

\begin{corollary}
For every integer $r \geq 5$ and every sufficiently large integer $q$,
\[
\TR(qK_r) > \R(qK_r).
\]
\end{corollary}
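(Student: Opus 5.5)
The corollary should follow directly from Theorem~\ref{thm:copies} with $H = K_r$. Since $K_r$ has no isolated vertices for $r \geq 2$, we only need to check criterion \eqref{eq:crit}. The relevant parameters are
\[
v(K_r) = r, \qquad \alpha(K_r) = 1, \qquad e(K_r) = \binom{r}{2} = \frac{r(r-1)}{2}.
\]
So \eqref{eq:crit} reads
\[
\frac{r(r-1)}{2} > 2r - 1,
\]
which is equivalent to $r^2 - 5r + 2 > 0$.

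The roots of $x^2 - 5x + 2$ are $(5 \pm \sqrt{17})/2$, approximately $0.44$ and $4.56$. Hence the inequality holds for every integer $r \geq 5$. For example, at $r = 5$ it reads $10 > 9$, and the left side grows faster than the right thereafter: the difference $\binom{r}{2} - (2r-1)$ increases by $r - 2 > 0$ when $r$ is replaced by $r+1$. The boundary case $r = 4$ fails, since $6 < 7$, which explains why the threshold is $5$.

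Theorem~\ref{thm:copies} then yields $\TR(qK_r) > \R(qK_r)$ for all $q \geq q_1(r)$, where the threshold depends on $r$ through $q_0(K_r)$ and $C_{K_r}$ from Theorem~\ref{thm:bes}. There is no real obstacle here. The only point needing care is the elementary inequality at $r = 5$ and the monotonicity argument for larger $r$.
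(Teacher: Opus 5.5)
Your proposal is correct and follows the paper's proof exactly: it applies Theorem~\ref{thm:copies} with $H = K_r$ and checks that \eqref{eq:crit}, namely $\binom{r}{2} > 2r - 1$, holds for all integers $r \geq 5$. Your verification of that inequality (roots of $r^2 - 5r + 2$, the base case $10 > 9$, and the increment $r - 2$) is accurate and slightly more explicit than the paper's one-line remark.
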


\begin{proof}
For $H = K_r$, condition \eqref{eq:crit} becomes
\[
\binom{r}{2} > 2r - 1,
\]
which holds precisely for integers $r \geq 5$.
\end{proof}

Write $K_5^-$ for the graph obtained from $K_5$ by deleting one edge. Then
\[
v(K_5^-) = 5, \qquad e(K_5^-) = 9, \qquad \alpha(K_5^-) = 2,
\]
so $2v(K_5^-) - \alpha(K_5^-) = 8 < 9 = e(K_5^-)$ and Theorem~\ref{thm:copies} applies to $K_5^-$ as well.

The next proposition records that these are the only witnesses to the criterion on at most five vertices.

\begin{proposition}
No graph without isolated vertices on at most four vertices satisfies \eqref{eq:crit}, and $K_5$ and $K_5^-$ are the only such graphs on five vertices.
\end{proposition}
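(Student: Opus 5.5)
The plan is a direct case analysis driven by the single inequality $e(H) \le \binom{v}{2}$ together with a lower bound on $\alpha(H)$. Write $v = v(H)$ and $\alpha = \alpha(H) \ge 1$. Condition \eqref{eq:crit} demands $e(H) \ge 2v - \alpha + 1$. Since $e(H) \le \binom{v}{2}$ and $\alpha \ge 1$, it forces $\binom{v}{2} \ge 2v$. For $v \le 4$ this fails: $\binom{v}{2} \in \{0,1,3,6\}$ against $2v \in \{2,4,6,8\}$, and $v=1$ is excluded anyway since $H$ has no isolated vertices. For $v=4$ the equality $6 = 6$ still falls short of $2v - \alpha + 1 = 8$ when $\alpha = 1$.

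To make the $v \le 4$ case airtight I will use $\alpha$ more carefully. If $\alpha = 1$ then $H = K_v$, and the condition reads $\binom{v}{2} > 2v-1$, which is false for $v \le 4$. If $\alpha \ge 2$, then $H$ misses at least $\binom{\alpha}{2} \ge 1$ edges, so
\[
e(H) \le \binom{v}{2} - \binom{\alpha}{2}.
\]
The condition then requires
\[
\binom{v}{2} - \binom{\alpha}{2} > 2v - \alpha.
\]
For $v \le 4$ the left side is at most $6-1 = 5$ while the right side is at least $8-4 = 4$. So only $v = 4$ with $\alpha \in \{3,4\}$ needs checking. There $\binom{\alpha}{2} \ge 3$ gives $e(H) \le 3$, whereas we need $e(H) > 8 - \alpha \ge 4$, a contradiction.

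For $v = 5$ the same bound $e(H) \le 10 - \binom{\alpha}{2}$ is set against the requirement $e(H) \ge 11 - \alpha$. If $\alpha = 1$, then $H = K_5$, which qualifies since $10 > 9$. If $\alpha = 2$, we need $e(H) \ge 9$, and $e(H) \le 9$, so $H$ is $K_5$ minus one edge, that is $K_5^-$; its independence number is indeed $2$, as already computed. If $\alpha \ge 3$, we need $e(H) \ge 11 - \alpha$, but $e(H) \le 10 - \binom{\alpha}{2}$. This gives $7$ versus $\le 7$ at $\alpha = 3$, which requires a careful look, and gives contradictions at $\alpha = 4, 5$ ($7$ vs $4$, $6$ vs $0$).

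The main obstacle is the borderline $\alpha = 3$, $v = 5$, $e = 7$. Here equality would require exactly the three edges inside an independent triple $S$ to be missing, so $H$ consists of $S$ together with two vertices $x, y$ adjacent to each other and to everything in $S$. But then the independence number is exactly $3$, and $e = 7$ is not strictly greater than $2\cdot5 - 3 = 7$. Hence \eqref{eq:crit} fails, strictly, and this case is excluded. Finally I will note that $K_5$ and $K_5^-$ have no isolated vertices, which completes the classification.
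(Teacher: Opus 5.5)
Your route is the paper's. Your bound $e(H)\le\binom{v}{2}-\binom{\alpha}{2}$ is exactly \eqref{eq:edgebound}, since $\binom{v}{2}=\binom{v-\alpha}{2}+\alpha(v-\alpha)+\binom{\alpha}{2}$, and your case split on $\alpha$ matches the paper's. Your final conclusions are correct, but several intermediate steps do not hold as written.

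In the first paragraph, $\alpha\ge 1$ only gives $2v-\alpha+1\le 2v$. That is the wrong direction to force $\binom{v}{2}\ge 2v$, and you rightly replace the argument.

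More importantly, in the case $v\le 4$, $\alpha\ge 2$, the observation ``left side at most $5$, right side at least $4$'' excludes nothing, since $5>4$ is compatible with the criterion. The right-side bound is also false for $v<4$, where $2v-\alpha$ can be $2$ or $3$. So the cases $(v,\alpha)=(4,2)$, $(3,2)$, $(3,3)$ and $(2,2)$ are never actually dismissed. The fix is to evaluate $\binom{v}{2}-\binom{\alpha}{2}$ against $2v-\alpha$ for each pair: $5\not>6$, $2\not>4$, $0\not>3$ and $0\not>2$ respectively. This is what the paper's ``direct substitution'' does.

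Finally, at $v=5$, $\alpha=3$ the requirement is $e(H)\ge 11-3=8$, not $7$, so there is no borderline case. The bound $e(H)\le 7=2v-\alpha$ already contradicts the strict inequality in \eqref{eq:crit}, exactly as in the paper. Your structural analysis of $K_2$ joined to three independent vertices is correct but unnecessary.
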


\begin{proof}
Let $H$ have $v = v(H)$ vertices and independence number $\alpha = \alpha(H)$. If $A$ is an independent set of size $\alpha$, then every edge of $H$ meets $V(H) \setminus A$, and hence
\begin{equation}\label{eq:edgebound}
e(H) \leq \binom{v - \alpha}{2} + \alpha(v - \alpha).
\end{equation}
If $v \leq 4$, direct substitution for $1 \leq \alpha \leq v-1$ in \eqref{eq:edgebound} gives $e(H) \leq 2v - \alpha$; for $\alpha = 1$ this is just $\binom{v}{2} \leq 2v - 1$. Thus no such graph satisfies \eqref{eq:crit}.

Now let $v = 5$. If $\alpha = 1$, then $H = K_5$, which satisfies $10 > 9$. If $\alpha = 2$, the criterion requires $e(H) > 8$. The case $e(H) = 10$ is again $K_5$ and has independence number one, while $e(H) = 9$ forces $H = K_5^-$, which has independence number two. If $\alpha = 3$, \eqref{eq:edgebound} gives $e(H) \leq 7 = 10 - \alpha$; for $\alpha = 4$ it gives $e(H) \leq 4 < 6$, and $\alpha = 5$ is impossible because $H$ has an edge. This proves the claim.
\end{proof}

\section{Consequences and remaining questions}

\begin{definition}
An \emph{ascending profile} with target $G$ of size $m$ is a sequence $\mathcal{H} = (H_1, \ldots, H_m)$ such that $e(H_i) = i$, $H_i$ is isomorphic to a subgraph of $H_{i+1}$ for $1 \leq i < m$, and $H_m \cong G$. A Ramsey chain $G_1, \ldots, G_m$ \emph{realises} $\mathcal{H}$ if $G_i \cong H_i$ for every $i$. Let $\TR(\mathcal{H})$ be the least integer $n$ such that every red--blue colouring of $K_n$ contains a Ramsey chain realising $\mathcal{H}$.
\end{definition}

\begin{remark}[Prescribed ascending profiles]
Every Ramsey chain realising an ascending profile $\mathcal{H}$ with target $G$ is, in particular, a Ramsey chain with target $G$. Hence $\TR(\mathcal{H}) \geq \TR(G)$. Therefore every graph supplied by Theorem~\ref{thm:planar}, Theorem~\ref{thm:dgrid}, or Theorem~\ref{thm:copies} satisfies
\[
\TR(\mathcal{H}) > \R(G)
\]
for every ascending profile $\mathcal{H}$ with target $G$.
\end{remark}

\begin{remark}[Minimal counterexamples]
The constructions above are asymptotic and do not determine the smallest counterexample, whether minimality is measured by order or by size. It remains natural to determine the least graph $G$ for which $\TR(G) > \R(G)$, and the least $t$ for which the square grid $\Gamma_t$ has this property. In the multiple-copy construction, quantitative bounds on the onset of the eventual linear regime, such as those of Buci\'c and Sudakov \cite{BS}, make the argument effective, although the resulting examples need not be small.
\end{remark}

\end{document}